\newtheorem{theorem}[equation]{Theorem}
\newtheorem{proposition}[equation]{Proposition}
\newtheorem{lemma}[equation]{Lemma}
\newtheorem{corollary}[equation]{Corollary}
\theoremstyle{definition}
\newtheorem{remark}[equation]{Remark}
\newtheorem{notation}[equation]{Notation}
\newtheorem{conjecture}[equation]{Conjecture}
\newtheorem{question}[equation]{Question}
\theoremstyle{plain}
\newcommand{\jump}[1]{\ensuremath{[\![#1]\!]} }
\newcommand{\disp}{\displaystyle}
\newcommand{\Spec}{\mathop{\mathrm{Spec}}\nolimits}
\newcommand{\Supp}{\mathop{\mathrm{Supp}}\nolimits}
\newcommand{\supp}{\mathop{\mathrm{supp}}\nolimits}
\newcommand{\cosupp}{\mathop{\mathrm{cosupp}}\nolimits}
\newcommand{\Ext}{\mathop{\mathrm{Ext}}\nolimits}
\newcommand{\Hom}{\mathop{\mathrm{Hom}}\nolimits}
\newcommand{\RHom}{\mathop{\mathrm{RHom}}\nolimits}
\newcommand{\RGamma}{\mathop{\mathrm{R\Gamma}}\nolimits}
\newcommand{\LLambda}{\mathop{\mathrm{L\Lambda}}\nolimits}
\newcommand{\Mod}{\mathop{\mathrm{Mod}}\nolimits}
\newcommand{\LotimesR}{\mathop{\otimes_R^{\rm L}}\nolimits}
\newcommand{\fa}{\mathfrak{a}}
\newcommand{\fc}{\mathfrak{c}}
\newcommand{\fp}{\mathfrak{p}}
\newcommand{\fq}{\mathfrak{q}}
\newcommand{\fm}{\mathfrak{m}}
\begin{document}
\title[Cosupports and minimal pure-injective resolutions of affine rings]{Cosupports and minimal pure-injective resolutions of affine rings}
\author[T. Nakamura]{Tsutomu Nakamura}
\address[T. Nakamura]{Dipartimento di Informatica - Settore di Matematica, Universit\`a degli Studi di Verona, Strada le Grazie 15 - Ca' Vignal, I-37134 Verona, Italy}
\email{tsutomu.nakamura@univr.it}
\subjclass[2010]{13J10, 18G25}
\keywords{support, cosupport, derived category}
\begin{abstract} 
We prove that any affine ring $R$ over a field $k$ has full cosupport, {\it i.e.}, the cosupport of $R$ is equal to $\Spec R$.
Using this fact, we give a complete description of all terms in a minimal pure-injective resolution of $R$, provided that $|k|=\aleph_1$ and $\dim R\geq 2$, or $|k|\geq \aleph_1$ and $\dim R=2$.
As a corollary, we obtain a partial answer to a conjecture by Gruson.
\end{abstract}

\maketitle
\section{Introduction}
\label{Introduction}
Let $R$ be a commutative noetherian ring.
We denote by $\mathrm{D}(R)$ the unbounded derived category of $R$. Note that the objects of $\mathrm{D}(R)$ are complexes of $R$-modules, which are cohomologically indexed;
$$X=(\cdots\to X^{i-1}\to X^{i}\to X^{i+1}\to \cdots).$$

The (small) support of $X\in \mathrm{D}(R)$ is defined as
$$
\supp_R  X= \{ \fp \in \Spec R \mid \kappa(\fp) \otimes^{\rm L}_R X \neq 0 \},
$$ where $\kappa(\fp)=R_\fp/\fp R_\fp$.
This notion was introduced by Foxby \cite{F}.
Let $\fa$ be an ideal of $R$, and write $\Gamma_{\fa}$ for the $\fa$-torsion functor $\varinjlim_{n\geq 1}\Hom_R(R/\fa^n,-)$ on the category $\Mod R$ of $R$-modules.
Then we have $\supp_R\RGamma_{\fa}X$  $\subseteq V(\fa)$ for any $X\in \mathrm{D}(R)$, see \cite[Proposition 3.13]{SW}.

The cosupport of $X\in \mathrm{D}(R)$ is defined as 
$$\cosupp_R X=\{\fp\in \Spec R \mid \RHom_R(\kappa(\fp),X)\neq 0 \}.$$
Benson, Iyengar and Krause \cite{BIK2} introduced this notion in more general triangulated categories, developing Neeman's work \cite{N2}.
We write $\Lambda^{\fa}$ for the $\fa$-adic completion functor $\varprojlim_{n\geq 1}(R/\fa^n\otimes_R-)$ on $\Mod R$.
Greenlees and May \cite{GM} proved that the left derived functor $\LLambda^{\fa}:\mathrm{D}(R)\to \mathrm{D}(R)$ is a right adjoint to $\RGamma_{\fa}:\mathrm{D}(R)\to \mathrm{D}(R)$, see also \cite[\S 4; p.\,69]{L}.
It follows from the adjointness property that $\cosupp_R\LLambda^{\fa}X\subseteq V(\fa)$ for any $X\in \mathrm{D}(R)$.

If $M$ is a finitely generated $R$-module, then Nakayama's lemma implies that $\supp_RM=\Supp_RM=\{\fp\in \Spec R \mid M_\fp\neq 0 \}$.
In particular, $\supp_RR$ is nothing but $\Spec R$.
However, it is not easy to compute $\cosupp_RR$ in general.

Since $\LLambda^{\fa}R\cong\Lambda^{\fa}R$, the cosupport of $\Lambda^{\fa}R$ is contained in $V(\fa)$.
Hence we have $\cosupp_RR\subseteq V(\fc_R)$, where $\fc_R$ denotes the sum of all ideals $\fa$ such that $R$ is $\fa$-adically complete.
In \cite[Question 6.13]{SW}, Sather-Wagstaff and Wicklein questioned whether the equality $\cosupp_R R = V(\fc_R)$ holds for any commutative noetherian ring $R$ or not.
Thompson \cite[Example 5.6]{PT1} gave a negative answer to this question.
He proved that if $k$ is a field and $R=k\jump{X}[Y]$, then $\cosupp_R R$ is strictly contained in $V(\fc_R)$. Moreover, it was also shown that the cosupport of this ring is not Zariski closed.

Following \cite{PT1}, we say that a commutative noetherian ring $R$ has {\it full cosupport} if $\cosupp_RR$ is equal to $\Spec R$.
Let $k$ be a field and $R$ be the polynomial ring $k[X_1,\ldots, X_n]$ in $n$ variables over $k$.
Then, since $\fc_R=(0)$, we expect that $\cosupp_RR = \Spec R$. 
Indeed, this is true.

\begin{theorem}
\label{main theorem}
Let $k$ be a field and $n$ be a non-negative integer.
The polynomial ring $k[X_1,\ldots,X_n]$ has full cosupport.
\end{theorem}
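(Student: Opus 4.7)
The plan is to prove directly that $\Ext_R^i(\kappa(\fp),R)\ne 0$ for some $i$, for every prime $\fp\subseteq R=k[X_1,\dots,X_n]$. Set $h=\hight\fp$. Since $R$ is regular, $R_\fp$ is a regular local ring of dimension $h$, so $\Ext^h_{R_\fp}(\kappa(\fp),R_\fp)\cong\kappa(\fp)\ne 0$ by a Koszul computation. Because $R/\fp$ is finitely generated, Ext commutes with localization in the first argument, so $M:=\Ext^h_R(R/\fp,R)$ is a nonzero finitely generated $R$-module with $M_\fp\cong\kappa(\fp)$. Since $\fp\in\supp_R M\subseteq V(\fp)$ and the support of a finitely generated module is specialization-closed, $\supp_R M=V(\fp)$. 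If $\fp$ is maximal, then $\kappa(\fp)=R/\fp$ and $\Ext_R^h(\kappa(\fp),R)=M\ne 0$; we are done.

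Now assume $\fp$ is non-maximal. Realize $\kappa(\fp)=\varinjlim_{s\in R\setminus\fp}R/\fp$ as a filtered colimit with structure maps multiplication by $s$. Assuming for the moment that $k$ is countable, choose a cofinal divisibility chain $s_1\mid s_2\mid\cdots$ in $R\setminus\fp$ and set $r_n:=s_{n+1}/s_n\in R\setminus\fp$. Applying $\RHom_R(-,R)$ produces the Milnor short exact sequence
\[
0\longrightarrow {\lim}^1_n\, M\longrightarrow \Ext_R^{h+1}(\kappa(\fp),R)\longrightarrow \lim_n\Ext_R^{h+1}(R/\fp,R)\longrightarrow 0,
\]
where the tower on $M$ has transition maps multiplication by the successive $r_n$. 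Hence it suffices to establish that ${\lim}^1_n M\ne 0$.

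Arrange the cofinal chain so that infinitely many $r_n$ reduce to non-units in $R/\fp$; this is possible because $R/\fp$, being a Noetherian domain of positive Krull dimension, contains non-units. For each $m\geq 1$ the submodule $N_m:=(r_1\cdots r_m)M\subseteq M$ satisfies $\supp_R N_m=V(\fp)$: indeed $(N_m)_\fp=M_\fp\ne 0$ since each $r_i$ is a unit in $R_\fp$, and support is specialization-closed. If $\overline{r_{m+1}}$ is a non-unit of $R/\fp$, then $r_{m+1}\in\fq$ for some $\fq\in V(\fp)=\supp_R N_m$, so multiplication by $r_{m+1}$ is not an isomorphism on $N_m$; since a surjective endomorphism of a finitely generated module is automatically an isomorphism, this forces $r_{m+1}N_m\subsetneq N_m$, i.e., $N_{m+1}\subsetneq N_m$. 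Consequently the chain $(N_m)_{m\geq 1}$ is eventually strictly decreasing, the Mittag-Leffler condition for the tower fails, and ${\lim}^1_n M\ne 0$.

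The extension to arbitrary $k$ proceeds by descent: choose a countable subfield $k_0\subseteq k$ such that $\fp$ is the extension of some prime $\fp_0$ of $R_0:=k_0[X_1,\dots,X_n]$, and use the fact that $R\cong R_0\otimes_{k_0}k$ is a free (in particular faithfully flat) $R_0$-module to transfer the nonvanishing of $\RHom_{R_0}(\kappa(\fp_0),R_0)$ to that of $\RHom_R(\kappa(\fp),R)$. The principal obstacle is the ${\lim}^1$-nonvanishing: one must arrange the cofinal sequence so that the transitions are genuinely non-surjective, which hinges on the equality $\supp_R M=V(\fp)$ combined with the availability of non-units in $R/\fp$. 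A secondary point of care is the descent step for uncountable $k$; verifying compatibility of cosupport with the base change $R_0\to R$ requires some flatness bookkeeping.
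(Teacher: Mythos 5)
Your argument in the countable case is essentially sound and is a genuinely different route from the paper's: realizing $\kappa(\fp)$ as a countable filtered colimit of copies of $R/\fp$, applying the Milnor sequence, and detecting the failure of Mittag--Leffler from $\supp_R\Ext^h_R(R/\fp,R)=V(\fp)$ together with the existence of nonzero non-units in $R/\fp$ does produce a nonzero class in $\Ext^{h+1}_R(\kappa(\fp),R)$ when $k$ is countable. One point you should make explicit: the implication ``ML fails $\Rightarrow{\lim}^1\neq 0$'' is \emph{not} true for arbitrary towers of abelian groups (e.g.\ the tower $\cdots\xrightarrow{\,p\,}\Z_p^{\wedge}\xrightarrow{\,p\,}\Z_p^{\wedge}$ fails ML but has vanishing ${\lim}^1$); it holds for towers of \emph{countable} groups by a theorem of Gray, so the countability of $k$ is being used a second time here.

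The genuine gap is the final ``descent'' step, and it is not a matter of flatness bookkeeping --- it is the entire difficulty of the theorem, since the countable case was already known (\cite[Theorem 4.11]{PT1}, as the paper notes). The base change $R_0\to R=R_0\otimes_{k_0}k$ is faithfully flat but not finite, $\kappa(\fp_0)$ is not a finitely generated $R_0$-module, and $\kappa(\fp_0)\otimes_{R_0}R\cong\kappa(\fp_0)\otimes_{k_0}k$ is a proper localization of $R/\fp$ rather than $\kappa(\fp)$, so no base-change isomorphism for $\RHom$ is available. More decisively, the statement your method would transfer is \emph{false} over uncountable fields: for $\fp=(0)$ and $n\geq 2$ your construction places the nonvanishing in $\Ext^{h+1}=\Ext^1$, whereas Thorup's theorem (Proposition \ref{Thorup's theorem}) gives $\Ext^1_R(Q(R),R)=0$ whenever $k$ is uncountable. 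The degree in which $\Ext^*_R(Q(R),R)$ is nonzero genuinely depends on $|k|$ (this is the content of Conjecture \ref{Gruson conj}), so no degree-preserving transfer from $R_0$ to $R$ can succeed; correspondingly, for uncountable $k$ the set $R\setminus\fp$ has no countable cofinal divisibility chain, the Milnor sequence is unavailable, and the modules involved are uncountable, so ML-failure no longer forces ${\lim}^1\neq 0$. The paper circumvents all of this by a different device, uniform in $k$: after reducing via Noether normalization to showing $(0)\in\cosupp_RR$, it derives a contradiction from the hypothetical vanishing of $\RHom_R(Q(R),R_y)$ for all $y$ by evaluating $\RHom_R(Q(R),-)$ on the \v{C}ech complex computing $\RGamma_{\fm}R\cong E_R(R/\fm)[-n]$, which visibly receives a nonzero map from $Q(R)$.
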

\color{black}

This fact was known in the case where $n\leq 1$ or $k$ is at most countable, see \cite[Theorem 4.13]{PT1}. See also \cite[Question 6.16]{SW}, \cite[Proposition 4.18]{BIK2} and \cite[Proposition 3.2]{G}.\medskip

We say that $R$ is an affine ring over a field $k$ if $R$ is finitely generated as a $k$-algebra.
By Theorem \ref{main theorem}, we can show the following corollary, which is the main result of this paper.

\begin{corollary}
\label{main cor}
Any affine ring over a field has full cosupport.
\end{corollary}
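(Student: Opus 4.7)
The plan is to deduce Corollary~\ref{main cor} from Theorem~\ref{main theorem} by a direct change-of-rings argument. Present $R$ as a quotient $R = S/I$, where $S = k[X_1,\ldots,X_n]$ is a polynomial ring; fix $\fp \in \Spec R$ and let $\fq \in V(I) \subseteq \Spec S$ be its preimage, so that $\kappa(\fp) \cong \kappa(\fq)$.

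The first step is to establish $\fq \in \cosupp_S R$, where $R$ is viewed as an $S$-module. Since $S$ is regular of finite Krull dimension and $R$ is a finitely generated $S$-module, the Hilbert syzygy theorem makes $R$ a perfect object of $D(S)$. The perfect-complex duality then yields the natural isomorphism
\[
\RHom_S(\kappa(\fq), R) \;\cong\; \RHom_S(\kappa(\fq), S) \LotimesS R.
\]
By Theorem~\ref{main theorem}, $\RHom_S(\kappa(\fq), S) \ne 0$. Each cohomology group $\Ext^i_S(\kappa(\fq), S)$ is a $\kappa(\fq)$-vector space (elements of $\fq$ act trivially, while elements of $S \setminus \fq$ act invertibly), so $\supp_S \RHom_S(\kappa(\fq), S) = \{\fq\}$. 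Since $\fq \supseteq I$ places $\fq$ in $\supp_S R = V(I)$, Foxby's support formula forces the above tensor product to be nonzero, giving $\RHom_S(\kappa(\fq), R) \ne 0$.

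The second step descends this to $\fp \in \cosupp_R R$ via the restriction-extension adjunction $\RHom_S(\kappa(\fq), R) \cong \RHom_R(\kappa(\fq) \LotimesS R,\, R)$ together with the hyper-Ext spectral sequence
\[
E_2^{p,q} \;=\; \Ext^p_R\bigl(\Tor^S_q(\kappa(\fq), R),\, R\bigr) \;\Longrightarrow\; \Ext^{p+q}_S(\kappa(\fq), R).
\]
Each $\Tor^S_q(\kappa(\fq), R)$ is a $\kappa(\fp)$-vector space (computed from a finite free $S$-resolution of $R$ tensored with $\kappa(\fq)$, whose reduced differentials are $\kappa(\fq)$-linear), so every $E_2^{p,q}$ is a direct sum of copies of $\Ext^p_R(\kappa(\fp), R)$. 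If $\Ext^\ast_R(\kappa(\fp), R)$ were zero, the entire $E_2$-page would vanish and the abutment would be zero, contradicting the first step. Therefore $\fp \in \cosupp_R R$, and since $\fp$ was arbitrary, $\cosupp_R R = \Spec R$.

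This plan uses only standard homological tools---the Hilbert syzygy theorem, perfect-complex duality, Foxby's support formula, and the hyper-Ext spectral sequence---once Theorem~\ref{main theorem} is granted. The real technical content of the paper is therefore concentrated in the proof of Theorem~\ref{main theorem}; I expect no serious obstacle in passing from the theorem to the corollary itself.
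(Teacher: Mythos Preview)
Your argument is correct. The route, however, is considerably more elaborate than the paper's. The paper dispatches the corollary in a single line by invoking Thompson's Lemma~\ref{Thompson's lemma}: the surjection $S=k[X_1,\ldots,X_n]\twoheadrightarrow R$ is finite, so $\cosupp_R R=f^{-1}(\cosupp_S S)=f^{-1}(\Spec S)=\Spec R$. What you do instead is reprove, by hand, the one direction of that lemma actually needed here---namely that $\fq\in\cosupp_S S$ forces $\fp\in\cosupp_R R$ for the quotient map---using perfect-complex duality over the regular ring $S$, Foxby's support formula, and a hyper-Ext spectral sequence. Your version is self-contained and makes the homological mechanism explicit (in particular it shows clearly why regularity of $S$, via the Hilbert syzygy theorem, is what drives the reduction), whereas the paper's version is much shorter but defers the content to the cited black box. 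Either way, as you correctly note, the substance of the paper lies in Theorem~\ref{main theorem}, not in this passage.
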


In Section 2, we prove the two results above.
Section 3 is devoted to summarize some facts about cosupport and minimal pure-injective resolutions. 
Section 4 contains applications of the main theorem. 
Let $k$ be a field with $|k|=\aleph_1$ and $R$ be an affine ring over $k$ such that $\dim R\geq 2$.
We specify all terms of a minimal pure-injective resolution of $R$. 
As a corollary, it is possible to give a partial answer to Gruson's conjecture. 
Suppose that $R$ is a polynomial ring $k[X_1,\ldots, X_n]$ over a field $k$. The conjecture states that $\Ext_R^i(R_{(0)},R)\neq 0$ if and only if $i=\inf\{s+1, n\}$, where $s$ is defined by the equation $|k|=\aleph_s$ if $k$ is infinite, and $s=0$ otherwise. 
We prove that this conjecture is true when $\inf\{s+1, n\}=2$.

\begin{remark}\label{Rickard}
There is another interesting consequence of Corollary \ref{main cor}. Let $R$ be an affine ring over a field. 
It then follows from the corollary and Neeman's theorem \cite[Corollary 2.8]{N2} that $\mathrm{D}(R)$ coincides with the smallest colocalizing subcategory containing $R$ (see also \cite[\S 9]{BIK2}). In other words, $R$ is a cogenerator of $\mathrm{D}(R)$. Compare this fact with  recent work \cite[\S 5]{JR} by Rickard. 
\end{remark}

\noindent{\bf Acknowledgements.}
The author is deeply grateful to Yuji Yoshino for his guidance at Okayama University. 
The author also thanks Srikanth Iyengar and Peder Thompson for their many helpful comments. 
\section{Proof of main result}

We start with the following proposition. 

\begin{proposition}[{Thompson \cite[Theorem 4.6]{PT1}}]\label{Thompson's lemma}
Let $\varphi: R\to S$ be a morphism of commutative 
noetherian rings. 
Suppose that $\varphi$ is finite, i.e., $S$ is finitely generated as an $R$-module.
Let $f:\Spec S\to \Spec R$ be the canonical map defined by $f(\fq)=\varphi^{-1}(\fq)$ for $\fq\in \Spec S$.
Then, there is an equality
$$\cosupp_S S=f^{-1}(\cosupp_R R).$$
In other words, for $\fq\in \Spec S$, we have $\fq\in \cosupp_SS$ if and only if $f(\fq) \in \cosupp_R R$.
\end{proposition}

When $R$ has finite Krull dimension, this fact follows from Lemma \ref{lemma 2}, Proposition \ref{Enochs's theorem} and (\ref{Thompson's isom}). See also Remark \ref{misfortune}.

\begin{remark}\label{Key remark}
Let $R$ be a commutative noetherian ring. We denote by $0_R$ the zero element of $R$. 
The following statements hold by Proposition \ref{Thompson's lemma}.

(i) Let $\fp$ be a prime ideal of $R$. 
Then we have $(0_{R/\fp})\in \cosupp_{R/\fp} R/\fp$ if and only if $\fp\in \cosupp_R R$.

(ii) Let $S$ be an integral domain. 
Suppose that there is a finite morphism $\varphi:R\to S$ of rings such that $\varphi$ is an injection. 
Then we have  $(0_S)\in \cosupp_SS$ if and only if $(0_R)\in \cosupp_R R$.
\end{remark}

We next recall a well-known description of local cohomology via \v{C}ech complexes.
Let $R$ be a commutative noetherian ring and $\fa$ be an ideal of $R$.
Let $\boldsymbol{x}=\{x_1,\ldots,x_n\}$ be a system of generators of $\fa$.
In $\mathrm{D}(R)$, $\RGamma_{\fa}R$ is isomorphic  to the (extended) \v{C}ech complex with respect to $\boldsymbol{x}$;
$$
0\longrightarrow R \longrightarrow \disp{\bigoplus_{1\leq i\leq n} R_{x_i}}\longrightarrow  \disp{\bigoplus_{1\leq i<j\leq n} R_{x_ix_j}}\longrightarrow \cdots \longrightarrow \disp{R_{x_1\cdots x_n}}\longrightarrow 0.
$$
Here, for an element $y\in R$, $R_y$ denotes the localization of $R$ with respect to the multiplicatively closed set $\{1, y, y^2,\ldots \}$. See \cite[Lecture 7; \S 4]{24} for details.

\begin{remark}\label{localization remark}
Let $k$ be a field and $n$ be a non-negative integer.
Set $R=k[X_1,\ldots, X_n]$ and $S=k[X_1,\ldots, X_{n+1}]$.
Take $y\in R$.
Then $S/(1-yX_{n+1})$ is isomorphic to $R_y$ as a $k$-algebra.
\end{remark}

Let $R$ be a commutative noetherian ring.
For $\fp\in \Spec R$, $E_R(R/\fp)$ denotes the injective envelope of $R/\fp$. Moreover, when $R$ is an integral domain, we denote by $Q(R)$ the quotient field of $R$. 

\begin{proof}[Proof of Theorem \ref{main theorem}]
Set $R=k[X_1,\ldots, X_n]$ and take $\fp\in\Spec R$.
We have to prove that $\fp\in \cosupp_RR$.
By Remark \ref{Key remark}(i), 
this is equivalent to showing that $(0_{R/\fp})\in \cosupp_{R/\fp}R/\fp$.
In addition, the Noether normalization theorem yields a finite morphism $\varphi:k[X_1,\ldots, X_m]\to R/\fp$ of rings such that $\varphi$ is an injection, where $\dim R/\fp=m$.  
Therefore, by Remark \ref{Key remark}(ii), it is sufficient to show that $(0_R)\in \cosupp_R R$ for any $n\geq 0$.

We assume that $(0_R)\notin \cosupp_R R$ for some $n\geq 0$, and deduce a contradiction.
Let $\fp$ be a prime ideal of $S=k[X_1,\ldots,X_{n+1}]$ with $\dim S/\fp=n$.
The Noether normalization theorem yields a finite morphism $\psi:R\to S/\fp$ of rings such that $\psi$ is an injection. 
Hence we have $(0_{S/\fp})\notin \cosupp_{S/\fp} S/\fp$ by the assumption and Remark \ref{Key remark}(ii).
Consequently, for any $y\in R$, it follows from Remark \ref{localization remark} that $(0_{R_y})\notin \cosupp_{R_y}R_y$.
In other words, we have $\RHom_{R_y}(Q(R_y), R_y)=0$ in $\mathrm{D}(R_y)$.
This implies that $(0_R)\notin \cosupp_R R_y$ for any $y\in R $, since $\RHom_{R_y}(Q(R_y), R_y)\cong \RHom_{R}(Q(R), R_y)$ in $\mathrm{D}(R)$.

Now set $\fm=(X_1,\ldots,X_n) \subseteq R$. 
Then $\RGamma_{\fm}R$ is isomorphic to the \v{C}ech complex with respect to $\boldsymbol{x}=\{X_1,\ldots,X_n\}$.
Hence we have $\RHom_R(Q(R),\RGamma_{\fm}R)=0$ by the above argument. 
However, there is an isomorphism $\RGamma_{\fm}R\cong E_R(R/\fm)[-n]$ in $\mathrm{D}(R)$, see \cite[Theorem 11.26]{24}. Moreover, the canonical map $R\to R/\fm$ induces a non-trivial map $Q(R)\to E_R(R/\fm)$, since $E_R(R/\fm)$ is injective.
Therefore $\RHom_{R}(Q(R), \RGamma_{\fm}R)$ must be non-zero in $\mathrm{D}(R)$. This is a contradiction.
\end{proof}

\begin{proof}[Proof of Corollary \ref{main cor}]
If a commutative noetherian ring $R$ has full cosupport, Proposition \ref{Thompson's lemma} implies  that $R/\fa$ has full cosupport for any ideal $\fa$ of $R$.
Therefore, this corollary follows from Theorem \ref{main theorem}.
\end{proof}

\begin{remark}
Let $k$ be a field and $R$ be an affine ring over $k$.
Let $y\in R$.
We see from Corollary \ref{main cor} and Remark \ref{localization remark} that $R_y$ has full cosupport.
\end{remark}

\begin{question}\label{question 1}
Let $k$ be a field and $n$ be a non-negative integer. Set $R=k[X_1,\ldots, X_n]$, and let $U$ be a multiplicatively closed subset of $R$.
Does the ring $U^{-1}R$ have full cosupport?
\end{question}

A commutative ring $R$ is said to be essentially of finite type over a field $k$ when $R$ is a localization of an affine ring over $k$.
If the question above is true, then any ring essentially of finite type over $k$ has full cosupport, by Corollary \ref{main cor}.

\begin{remark}
Let $R$ be a commutative noetherian ring with finite Krull dimension and $X\in \mathrm{D}(R)$ be a complex with finitely generated cohomology modules. 
As shown in \cite[Corollary 4.4]{PT1}, there is an equality $\cosupp_RX=\supp_RX\cap \cosupp_RR$, see also \cite[Theorem 6.6]{SW}.
Hence, if $R$ has full cosupport, then we have $\cosupp_RX=\supp_RX$, so that
 the cosupport of any finitely generated $R$-module $M$ is Zariski-closed, since $\supp_RM=\Supp_RM$.
\end{remark}

In \cite[Example 5.7]{PT1}, Thompson proved that the cosupport of $k\jump{X}[Y_1,\ldots, Y_n]$ is not Zariski-closed for $n\geq 1$, where $k$ is any field.
Hence the following question naturally arises.

\begin{question}\label{question 2}
Let $R$ be any commutative noetherian ring.
Is the cosupport of $R$ specialization-closed?
\end{question}

\section{Cosupport and Minimal pure-injective resolutions}


In this section, we summarize some known facts about cosupport and minimal pure-injective resolutions. They will be used in Section 4. 
\medskip

Let $R$ be a commutative noetherian ring.
For an $R$-module $M$ and an ideal $\fa$ of $R$, we denote by $M^\wedge_{\fa}$ the $\fa$-adic completion $\Lambda^{\fa}M=\varprojlim_{n\geq 1} M/\fa^n M$.
In addition, for the localization $M_\fp$ at a prime ideal $\fp$, we also write $\widehat{M_\fp}=\Lambda^{\fp}M_\fp$.

We recall two formulas for an $R$-module of the form $\prod_{\fp\in \Spec R} T_\fp$, where $T_\fp$ is the $\fp$-adic completion of a free $R_\fp$-module for $\fp\in \Spec R$. Take $\fq\in \Spec R$, and write $U(\fq)=\{\fp\in \Spec R\mid\fp\subseteq \fq\}$.
It then holds that 
$$\Hom_R(R_\fq, \prod_{\fp\in \Spec R}T_\fp)\cong \prod_{\fp\in U(\fq)}T_\fp, \ \ \ \ \Lambda^{\fq}(\prod_{\fp\in \Spec R}T_\fp)\cong \prod_{\fp\in V(\fq)}T_\fp,$$
see \cite[Lemma 2.2]{PT2}.

Let $P=(0\to PE^0(M)\to PE^1(M)\to \cdots)$ be a minimal pure-injective resolution of an $R$-module $M$; it is constructed by pure-injective envelopes, see \cite[\S 6.7]{EJ}.
As every term of $P$ is cotorsion (see \cite[Definition 5.3.22]{EJ}), we have $\RHom_R(F, M)\cong \Hom_R(F, P)$ in $\mathrm{D}(R)$ for a flat $R$-module $F$.
Further, if $M$ is flat, then each $PE^i(M)$ is isomorphic to the direct product of the $\fp$-adic completion of a free $R_\fp$-module for $\fp \in \Spec R$, see \cite[\S 8.5]{EJ}.

\begin{notation}\label{minimal pure remark 2}
Let $F$ be a flat $R$-module.
As mentioned above, we may write $PE^i(F)= \prod_{\fp\in \Spec R} T^i_\fp$ for $i\geq 0$, where $T^i_\fp=(\bigoplus_{B_\fp^i}R_\fp)^\wedge_{\fp}$ for some cardinal $B_\fp^i$.
\end{notation}

\begin{lemma}\label{lemma 2}
Suppose that $\dim R$ is finite. 
Let $F$ be a flat $R$-module.
Using Notation \ref{minimal pure remark 2}, we write $PE^i(F)=\prod_{\fp\in \Spec R} T^i_\fp$. 
Then $\fp \in \cosupp_RF$ if and only if $T^i_\fp\neq 0$ for some $i$.
\end{lemma}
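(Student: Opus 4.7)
The plan is to apply Lemma \ref{lemma 1} in the form $\fp \in \cosupp_R F \Leftrightarrow \LLambda^{\fp}\RHom_R(R_\fp, F) \neq 0$, and to extract from the minimal pure-injective resolution $P^{\bullet} := PE^{\bullet}(F)$ an explicit representative of $\LLambda^{\fp}\RHom_R(R_\fp, F)$ in $D(R)$ by applying the two product formulas recalled just before Notation \ref{minimal pure remark 2}.

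First, since each $P^i$ is cotorsion and $R_\fp$ is flat, we have $\RHom_R(R_\fp, F) \simeq \Hom_R(R_\fp, P^{\bullet})$ in $D(R)$. By the first product formula, each term of this complex equals $\prod_{\fq \subseteq \fp} T^i_\fq$, which is again cotorsion flat. Next, applying $\Lambda^{\fp}$ termwise and using the second product formula $\Lambda^{\fp}(\prod_{\fq \in \Spec R} T_\fq) \cong \prod_{\fq \in V(\fp)} T_\fq$, only the index $\fq = \fp$ survives, so the resulting complex is $T^{\bullet}_\fp = (T^0_\fp \to T^1_\fp \to \cdots)$. Under the hypothesis $\dim R < \infty$, the left-derived functors of $\Lambda^{\fp}$ vanish on the cotorsion flat modules appearing here (this follows from a \v{C}ech/Koszul computation of finite length together with commutation of $\Lambda^{\fp}$ with arbitrary products), so we may replace $\Lambda^{\fp}$ by $\LLambda^{\fp}$ and obtain $\LLambda^{\fp}\RHom_R(R_\fp, F) \simeq T^{\bullet}_\fp$ in $D(R)$.

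Finally, I would invoke the minimality of $P^{\bullet}$: by analogy with the minimality of injective resolutions, minimality here forces each induced differential $T^i_\fp \to T^{i+1}_\fp$ to factor through $\fp T^{i+1}_\fp$, so the reduction $T^{\bullet}_\fp \otimes_{R_\fp} \kappa(\fp)$ is a complex of $\kappa(\fp)$-vector spaces with zero differentials, whose $i$-th term has $\kappa(\fp)$-dimension $B^i_\fp$. In particular this reduction is non-zero in $D(R_\fp)$ precisely when some $T^i_\fp \neq 0$, and by the derived Nakayama lemma applied to the $\fp R_\fp$-adically complete modules $T^i_\fp$, the same dichotomy lifts to $T^{\bullet}_\fp$ itself. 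Combining this with the identification of the previous paragraph proves the lemma.

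The main obstacle will be justifying rigorously the identification $\LLambda^{\fp}\Hom_R(R_\fp, P^{\bullet}) \simeq T^{\bullet}_\fp$, i.e.\ that termwise underived $\Lambda^{\fp}$ already computes the derived functor. This is where the hypothesis $\dim R < \infty$ enters essentially; it relies on the structural theory of cotorsion flat modules (Enochs--Jenda, Xu) and on controlling the interaction of $\LLambda^{\fp}$ with the arbitrary direct products present in each $P^i$.
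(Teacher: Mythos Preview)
Your approach is essentially the paper's: it deduces the lemma from Lemma~\ref{lemma 1}, the explicit identifications recorded in Remark~\ref{minimal pure remark 3}(ii), and Lemma~\ref{lemma 3} (your minimality step, cited there from \cite[Proposition 8.5.26]{EJ}). One sharpening of your flagged obstacle: the role of $\dim R<\infty$ is simply that $PE^i(F)=0$ for $i>\dim R$ (Remark~\ref{minimal pure remark 3}(i), from \cite[Corollary 8.5.10]{EJ}), so $\Hom_R(R_\fp,P)$ is a \emph{bounded} complex of flat modules on which $\LLambda^{\fp}$ and $(-)\LotimesR\kappa(\fp)$ coincide with their underived versions---no separate analysis of $L_i\Lambda^{\fp}$ on individual cotorsion-flat terms is required, and invoking the third equivalence of Lemma~\ref{lemma 1} directly renders your derived-Nakayama step superfluous.
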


For the reader's convenience, we justify the above lemma in the next remark.

\begin{remark}\label{minimal pure remark 3}
Let $F$ and $PE^i(F)$ be as in Lemma \ref{lemma 2}. 

(i) By \cite[Corollary 8.5.10]{EJ}, we have $PE^i(F)= \prod_{\fp\in W_{\geq i}}T^i_\fp$ for $i\geq 0$, where $W_{\geq i}=\{\fp\in \Spec R \mid \dim R/\fp\geq i\}$.

(ii) Let $P$ be a minimal pure-injective resolution of $F$, where $P^i=PE^i(F)$. It then follows from (i) that $P^i=0$ for $i>n=\dim R$.
Moreover, recall that $\LLambda^{\fp}X\cong \Lambda^{\fp}X$ and $\kappa(\fp)\LotimesR X\cong \kappa(\fp)\otimes_R X$ if $X$ is a complex of flat $R$-modules with $X^i=0$ for $i\gg 0$.
Then we see that
\begin{align*}
\LLambda^{\fp}\RHom_R(R_\fp, F)&\cong \Lambda^{\fp}\Hom_R(R_\fp, P)\\
&\cong (0\to  T^0_\fp\to  \cdots \to T^n_\fp\to 0),\\
\kappa(\fp)\LotimesR\RHom_R(R_\fp, F)&\cong \kappa(\fp)\otimes_R\Hom_R(R_\fp, P)\\
&\cong (0\to  \textstyle{\bigoplus_{B_\fp^0}\kappa(\fp)}\to  \cdots \to \textstyle{\bigoplus_{B_\fp^n}\kappa(\fp)}\to 0),
\end{align*}
where $T^i_\fp=(\bigoplus_{B_\fp^i}R_\fp)^\wedge_{\fp}$.
Note that all differentials in 
$\kappa(\fp)\otimes_R\Hom_R(R_\fp, P)$ are zero, see \cite[Proposition 8.5.26]{EJ}. Hence Lemma \ref{lemma 2} follows from the following bi-implications for $X\in \mathrm{D}(R)$;
$$\fp\in \cosupp_RX\Leftrightarrow \LLambda^{\fp}\RHom_R(R_\fp, X)\neq 0\Leftrightarrow \kappa(\fp)\LotimesR \RHom_R(R_\fp, X)\neq 0,$$
see \cite[Proposition 4.4]{SW}.

(iii) The cardinality $B^i_\fp$ defining the $R$-module $T^i_\fp=(\bigoplus_{B_\fp^i}R_\fp)^\wedge_{\fp}$  is nothing but  $\dim_{\kappa(\fp)}\mathrm{H}^i\big(\kappa(\fp)\LotimesR\RHom_R(R_\fp, F)\big)$.
\end{remark}

\begin{proposition}
\label{Enochs's theorem}
Suppose that $\varphi:R\to S$ is a finite morphism of commutative noetherian rings.
Let $P$
be a minimal pure-injective resolution of a flat $R$-module $F$.
Then $S\otimes_RP$ is a minimal pure-injective resolution of $S\otimes_RF$ in $\Mod S$.
\end{proposition}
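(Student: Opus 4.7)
The plan is to verify the three defining properties of $P\otimes_R S$ as a minimal pure-injective resolution of $F\otimes_R S$ in $\Mod S$: exactness, pure-injectivity of each term over $S$, and minimality. For exactness, I would use that $0\to F\to P^0\to P^1\to\cdots$ splices from short exact sequences $0\to K^i\to P^i\to K^{i+1}\to 0$ with each $K^i\hookrightarrow P^i$ a pure monomorphism, by construction of the pure-injective envelope. Every $K^i$ is therefore flat, tensoring with $S$ preserves exactness of each short exact sequence, and splicing gives the exact sequence $0\to F\otimes_R S\to P^0\otimes_R S\to P^1\otimes_R S\to\cdots$.

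For pure-injectivity I would make the cotorsion flat structure of each $P^i\otimes_R S$ explicit. Writing $P^i=\prod_{\fp\in\Spec R}T^i_\fp$ with $T^i_\fp=(\bigoplus_{B^i_\fp}R_\fp)^\wedge_\fp$ as in Notation \ref{minimal pure remark 2}, finite presentation of $S$ over $R$ makes $-\otimes_R S$ commute with direct products, and finite presentation of $S_\fp$ over $R_\fp$ makes tensoring commute with $\fp$-adic completion, giving $T^i_\fp\otimes_R S\cong(\bigoplus_{B^i_\fp}S_\fp)^\wedge_\fp$. Since $S_\fp/\fp^n S_\fp$ is Artinian and decomposes via the Chinese remainder theorem as $\prod_{\fq\cap R=\fp}S_\fq/\fp^n S_\fq$, and since this finite product commutes with inverse limits,
\[
\bigl(\textstyle\bigoplus_{B^i_\fp}S_\fp\bigr)^\wedge_\fp\;\cong\;\prod_{\fq\cap R=\fp}\bigl(\textstyle\bigoplus_{B^i_\fp}S_\fq\bigr)^\wedge_\fq.
\]
Assembling,
\[
P^i\otimes_R S\;\cong\;\prod_{\fq\in\Spec S}\bigl(\textstyle\bigoplus_{B^i_{\fq\cap R}}S_\fq\bigr)^\wedge_\fq,
\]
a cotorsion flat, hence pure-injective, $S$-module.

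For minimality I would match cardinalities in the unique cotorsion flat decomposition. Since $P\otimes_R S$ is already a pure-injective resolution of $G=F\otimes_R S$, the complex $\Hom_S(S_\fq, P\otimes_R S)\otimes_S\kappa(\fq)$ computes $\RHom_S(S_\fq,G)\LotimesS\kappa(\fq)$. Applying the formula $\Hom_S(S_\fq,\prod_{\fq'}T^i_{\fq'})\cong\prod_{\fq'\subseteq\fq}T^i_{\fq'}$ together with the vanishing $(\bigoplus_B S_{\fq'})^\wedge_{\fq'}\otimes_S\kappa(\fq)=0$ for $\fq'\subsetneq\fq$ (any element of $\fq\setminus\fq'$ acts as a unit on $S_{\fq'}$) to the decomposition from the previous paragraph yields, with $\fp=\fq\cap R$, a natural identification of complexes
\[
\Hom_S(S_\fq, P\otimes_R S)\otimes_S\kappa(\fq)\;\cong\;\bigl(\Hom_R(R_\fp, P)\otimes_R\kappa(\fp)\bigr)\otimes_{\kappa(\fp)}\kappa(\fq).
\]
Lemma \ref{lemma 3} forces the differentials on the right to vanish, and hence those on the left, so Remark \ref{minimal pure remark 4} identifies the cardinality at $\fq$ of the minimal pure-injective resolution of $G$ as $B^i_\fp$, matching the cardinality produced previously. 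This forces any contractible summand in $P\otimes_R S$ to vanish, so $P\otimes_R S$ is itself the minimal pure-injective resolution of $G$.

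The main obstacle will be making this identification of differentials rigorous: one needs the structural isomorphism used for pure-injectivity to be natural in $P^i$ with respect to the differentials of $P$, so that vanishing upstairs transfers downstairs. The semi-local completion identity---that the $\fp$-adic completion of $\bigoplus_{B^i_\fp}S_\fp$ distributes over the finite product of local completions at the primes $\fq$ of $S$ above $\fp$---is the other key technical ingredient.
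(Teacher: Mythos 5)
The paper offers no proof of this proposition: it is quoted directly from \cite[Theorem 8.5.1]{EJ}. Your argument is therefore necessarily a different route, essentially a reconstruction of the Enochs--Jenda/Thompson argument from ingredients the paper does record, and its overall architecture (pure-exact splicing with flat syzygies, the structural isomorphism $P^i\otimes_RS\cong\prod_{\fq}(\bigoplus_{B^i_{f(\fq)}}S_\fq)^\wedge_\fq$, transfer of the zero-differential property) is sound. Note, however, that the structural isomorphism you derive is exactly the paper's (\ref{Thompson's isom}), which is obtained there from (\ref{cototsion flat}) together with $\Hom_R(S,E_R(R/\fp))\cong\bigoplus_{\fq\in f^{-1}(\fp)}E_S(S/\fq)$, and which is available without circularity; you could simply invoke it. If you keep your direct derivation, the step ``finite presentation of $S_\fp$ over $R_\fp$ makes tensoring commute with $\fp$-adic completion'' needs real justification: $\Lambda^{\fp}$ is an inverse limit and $-\otimes_RS$ is only right exact, so for the infinite-rank free module $\bigoplus_{B^i_\fp}R_\fp$ this commutation is not a formal consequence of finite presentation. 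The clean way around it is the $\Hom$--tensor route through (\ref{cototsion flat}).

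The one genuine logical gap is the final minimality step. Matching the cardinality invariants of $P\otimes_RS$ with those of the minimal pure-injective resolution of $F\otimes_RS$ does not force contractible summands to vanish: a summand $0\to T\to T\to 0$ contributes to two consecutive invariants, and when these are infinite the totals are unchanged, so the comparison proves nothing. What does close the argument is the other fact you establish, namely that the differentials of $\Hom_S(S_\fq,P\otimes_RS)\otimes_S\kappa(\fq)$ vanish for every $\fq$: by the converse direction of Lemma \ref{lemma 3}, i.e.\ Thompson's characterization of minimal complexes of cotorsion flat modules \cite[Theorem 3.5]{PT2} (which the paper points to), this property is equivalent to minimality, and it visibly excludes contractible summands since such a summand would contribute an identity differential at some $\fq$. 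So the cardinality comparison should be replaced by an appeal to that characterization. The naturality issue you flag at the end is then the only remaining substantive work: you must check that the identification of $\Hom_S(S_\fq,P\otimes_RS)\otimes_S\kappa(\fq)$ with $\bigl(\Hom_R(R_\fp,P)\otimes_R\kappa(\fp)\bigr)\otimes_{\kappa(\fp)}\kappa(\fq)$ is compatible with the differentials of $P$, which requires the isomorphism (\ref{Thompson's isom}) to be natural in the cotorsion flat module and not merely an abstract isomorphism of terms; this is true but is precisely the technical content that \cite[Theorem 8.5.1]{EJ} supplies.
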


See \cite[Theorem 8.5.1]{EJ} for the proof.\medskip

Let $\varphi: R\to S$ be as in Proposition \ref{Enochs's theorem} and $f:\Spec S\to \Spec R$ be a canonical map induced by $\varphi$. For later use, we recall an isomorphism in \cite[Theorem 4.6]{PT1}, which is useful to know components of $S\otimes_RP$ in the proposition.
Let $B_\fp$ be some cardinality for each $\fp\in \Spec R$.
Then it holds that 
\begin{align}\label{Thompson's isom}
S\otimes_R\Biggl(\hspace{1pt}\prod_{\fp\in \Spec R}\Biggl(\bigoplus_{B_\fp}R_\fp\Biggr)^\wedge_{\fp}\hspace{1pt}\Biggr)\cong \prod_{\fq\in \Spec S}\Biggl(\bigoplus_{B_\fq}S_\fq\Biggr)^\wedge_{\fq}
\end{align}
where $B_\fq=B_\fp$ if $f(\fq)=\fp$.
\color{black}

\begin{remark}\label{misfortune}
Thompson \cite[Theorem 2.7]{PT1} formulated Lemma \ref{lemma 2} in a more general setting. However there is an error in his theorem, and it is used to show \cite[Theorem 4.6]{PT1} (Proposition \ref{Thompson's lemma}).
What we have to emphasize here is that the error does not influence this paper at all. In fact, when $R$ has finite Krull dimension, the reader can see from this section the validity of Proposition \ref{Thompson's lemma}, as mentioned just after it.

In recent work \cite[\S 5]{NT}, we have corrected and improved \cite[Theorem 2.7]{PT1}. It has been also ensured that \cite[Theorem 4.6]{PT1} is actually true for arbitrary commutative noetherian rings.

\end{remark}


\section{Applications}

In this section, we give a complete description of all terms in a minimal pure-injective resolution of an affine ring $R$ over a field $k$, where $|k|=\aleph_1$ and $\dim R\geq 2$, or $|k|\geq \aleph_1$ and $\dim R=2$.
Consequently we obtain a partial answer to the following conjecture by Gruson, which is stated in a paper \cite{AT} of Thorup.

\begin{conjecture}\label{Gruson conj}
Let $k$ be a field.
We define $s$ by the equation $|k|=\aleph_s$ of  cardinals if $k$ is infinite, and $s=0$ otherwise. 
Let $n$ be a non-negative integer, and set $R=k[X_1,\ldots,X_n]$.
Then $\Ext_R^i(Q(R),R)\neq 0$ if and only if $i=\inf\{s+1, n\}$.\end{conjecture}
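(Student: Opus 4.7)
The plan is to reduce Gruson's conjecture to pinning down the cardinalities $B^i_{(0)}$ at the generic point in a minimal pure-injective resolution of $R$ as a flat module over itself. By Notation \ref{minimal pure remark 2}, the generic term $T^i_{(0)} \subseteq PE^i(R)$ equals $\bigoplus_{B^i_{(0)}} Q(R)$, since $R_{(0)} = Q(R)$ is already its own $(0)$-adic completion; and by Remark \ref{minimal pure remark 4},
\[
B^i_{(0)} = \dim_{Q(R)} \Ext^i_R(Q(R), R).
\]
Hence the conjecture becomes the assertion that $B^i_{(0)} \neq 0$ if and only if $i = \inf\{s+1, n\}$.

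The first step is the upper bound $\Ext^i_R(Q(R), R) = 0$ for $i > \inf\{s+1, n\}$. The inequality $i \leq n$ is automatic from Remark \ref{minimal pure remark 3}(i), since $(0)$ lies in $W_{\geq i}$ only when $i \leq n$. The inequality $i \leq s+1$ is the classical theorem of Gruson and Jensen that flat modules over a commutative Noetherian ring of cardinality at most $\aleph_s$ have projective dimension at most $s+1$; here $|R| = \aleph_s$ when $k$ is infinite, and $|R| = \aleph_0$ (with $s = 0$) when $k$ is finite. Combined with Theorem \ref{main theorem}, which via Lemma \ref{lemma 1} forces $(0) \in \cosupp_R R$ and hence the existence of at least one nonzero $B^i_{(0)}$, this already squeezes some nonzero $\Ext$ into a degree $\leq \inf\{s+1, n\}$.

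The second and main step is the sharp lower-bound vanishing $\Ext^i_R(Q(R), R) = 0$ for $i < \inf\{s+1, n\}$. I would split it according to which of the two competing constraints is active. For the dimension side $i < n$, the plan is to exploit that $Q(R)$ is supported only at $(0)$, so that localizing the \v{C}ech description of $\RGamma_\fm R$ (from Section 2) and applying the triangle $\RGamma_\fm R \to R \to \LLambda^\fm R$ concentrates $\RHom_R(Q(R), R)$ in high cohomological degrees. For the cardinality side $i < s+1$, the plan is to fix an $\aleph_s$-indexed transfinite filtration $\{F_\alpha\}_{\alpha < \aleph_s}$ of $Q(R)$ by $R$-submodules of small projective dimension, apply the Grothendieck spectral sequence
\[
E_2^{p,q} = \textstyle\varprojlim_\alpha^p \Ext^q_R(F_\alpha, R) \Longrightarrow \Ext^{p+q}_R(Q(R), R),
\]
and verify via a Mittag-Leffler / cofinality argument that $\varprojlim^p_\alpha$ vanishes for $p \leq s$ on the resulting tower.

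The main obstacle will be this cardinality-sensitive lower-bound vanishing in the regime $s + 1 < n$. The case $\inf\{s+1, n\} = 2$ treated in the present paper escapes the deepest transfinite combinatorics because the only candidate nonzero degrees lie in $\{0, 1, 2\}$, where degree $0$ is killed by $\Hom_R(Q(R), R) = 0$ and degree $1$ is pinched directly between the Gruson-Jensen bound and full cosupport. For general $s$ and $n$, however, one would need a transfinite filtration of $Q(R)$ that is simultaneously compatible with the transcendence structure of $k(X_1, \ldots, X_n)/k$ and with the minimality of the pure-injective resolution of $R$, together with a precise quantitative matching against Osofsky's classical non-vanishing of $\varprojlim^{s+1}$, in order to pin the unique nonzero degree to $\inf\{s+1, n\}$ exactly rather than somewhere strictly between these two extremes.
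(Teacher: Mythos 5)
The statement you are addressing is stated in the paper as a conjecture, and the paper itself does not prove it in general: it only establishes the case $\inf\{s+1,n\}=2$ (Theorem \ref{partial answer}, Theorem \ref{complete description}), and your proposal likewise does not close the general case, as you acknowledge at the end. The more serious issue is that your argument has a gap even in the one case that is actually within reach. You claim that for $\inf\{s+1,n\}=2$ the degree-$1$ vanishing is ``pinched directly between the Gruson--Jensen bound and full cosupport.'' It is not. Those two inputs give you, respectively, $B^i_{(0)}=0$ for $i>\inf\{s+1,n\}$ (together with Remark \ref{minimal pure remark 3}(i) for the bound $i\leq n$) and the existence of \emph{some} $i$ with $B^i_{(0)}\neq 0$; combined with $\Hom_R(Q(R),R)=0$ they only locate the nonvanishing somewhere in $\{1,2\}$ and say nothing about which degree it occupies. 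The paper's proof of the case $\inf\{s+1,n\}=2$ hinges on an additional, independent input: Thorup's theorem (Proposition \ref{Thorup's theorem}, \cite[Theorem 13]{AT}) that $\Ext^1_R(Q(R),R)=0$ for $k$ uncountable and $n\geq 2$. That vanishing is then propagated to all primes $\fp$ with $\dim R/\fp\geq 2$ via Noether normalization, base change of minimal pure-injective resolutions (Proposition \ref{Enochs's theorem}) and the isomorphism (\ref{Thompson's isom}), which is what forces $T^1_\fp=0$ for such $\fp$ and hence concentrates $\RHom_R(R_\fp,R)$ in degree $2$. Your proposal contains no substitute for this step.

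Your sketches for the two ``lower-bound'' vanishings in the general case are also not workable as stated. On the dimension side, the fact that $Q(R)$ is supported only at $(0)$ does not concentrate $\RHom_R(Q(R),R)$ in degrees near $n$; indeed the conjecture itself predicts nonvanishing in degree $s+1<n$ when $k$ is small, so no argument of that shape can yield $\Ext^i_R(Q(R),R)=0$ for all $i<n$ unconditionally. On the cardinality side, the $\varprojlim^p$ spectral sequence over a transfinite filtration reproduces the Gruson--Raynaud \emph{upper} bound $\pd_R Q(R)\leq s+1$, but the required vanishing for $1\leq i<s+1$ (when $s+1\leq n$) is precisely the open content of the conjecture, and your Mittag--Leffler/cofinality claim is asserted rather than justified. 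In summary: your reduction of the conjecture to the cardinalities $B^i_{(0)}$ via Remark \ref{minimal pure remark 4}, and your use of full cosupport (Theorem \ref{main theorem}) to guarantee at least one nonzero term at the generic point, correctly reproduce the paper's framework; but the decisive vanishing results --- Thorup's $\Ext^1$ theorem in the accessible case, and anything at all in the remaining cases --- are missing.
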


If $k$ is at most countable or $n\leq 1$, {\it i.e.}, $\inf\{s+1, n\}\leq 1$, then this conjecture is true, see \cite[\S1; 1]{AT}. 
In this section, we shall prove the following theorem, which implies that the conjecture is true in the case that  $\inf\{s+1, n\}= 2$.

\begin{theorem}\label{partial answer}
Let $k$ be an uncountable field and $R$ be an affine ring over $k$ with $\dim R\geq 2$.
Assume that $|k|=\aleph_1$ or $\dim R=2$.
Let  $\fp$ be a prime ideal of $R$ with $\dim R/\fp\geq 2$.
Then $\Ext^i_R(R_\fp, R)\neq 0$ if and only if $i=2$. 
\end{theorem}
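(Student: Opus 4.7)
My plan is to read off $\Ext^i_R(R_\fp, R)$ from the minimal pure-injective resolution $P$ of $R$. Since $R_\fp$ is flat and each $P^j$ is cotorsion, one has $\Ext^i_R(R_\fp, R) \cong H^i(\Hom_R(R_\fp, P))$. Combining the formula $\Hom_R(R_\fp, \prod_\fq T^i_\fq) \cong \prod_{\fq \in U(\fp)} T^i_\fq$ from Section~3 with the description in Remark~\ref{minimal pure remark 3}(i) of $P^i = \prod_{\fq \in W_{\geq i}} T^i_\fq$, where $T^i_\fq = (\bigoplus_{B^i_\fq} R_\fq)^\wedge_\fq$ as in Notation~\ref{minimal pure remark 2}, one obtains
\[
\Hom_R(R_\fp, P^i) \;\cong\; \prod_{\fq \in U(\fp) \cap W_{\geq i}} T^i_\fq,
\]
so the theorem reduces to controlling the ranks $B^i_\fq$ for the primes $\fq \subseteq \fp$.

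I would take as input the ``complete description of a minimal pure-injective resolution of $R$'' announced in the introduction and established in the earlier part of Section~4, which should assert that under the hypotheses of the theorem $B^i_\fq \neq 0$ if and only if $i = \inf\{s+1,\, \dim R/\fq\}$ for every $\fq \in \Spec R$. Given this, every $\fq \in U(\fp)$ satisfies $\dim R/\fq \geq \dim R/\fp \geq 2$, and the hypotheses force $\inf\{s+1,\, \dim R/\fq\} = 2$ in both cases (case (i): $s+1 = 2$; case (ii): $\dim R/\fq = 2$ since $\dim R = 2$). Therefore $T^i_\fq = 0$ for every $\fq \in U(\fp)$ whenever $i \neq 2$, hence $\Hom_R(R_\fp, P^i) = 0$ for $i \neq 2$, and consequently $\Ext^i_R(R_\fp, R) = 0$ for $i \neq 2$.

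For the non-vanishing at $i = 2$ I would invoke Corollary~\ref{main cor}: $R$ has full-cosupport, so $\fp \in \cosupp_R R$, and Lemma~\ref{lemma 2} then guarantees $B^j_\fp \neq 0$ for some $j$; combined with the input from the previous paragraph this forces $B^2_\fp \neq 0$, making $T^2_\fp$ a nonzero factor of $\Hom_R(R_\fp, P^2) = \prod_{\fq \in U(\fp)} T^2_\fq$. Since the complex $\Hom_R(R_\fp, P)$ is concentrated in degree $2$, this gives $\Ext^2_R(R_\fp, R) = \prod_{\fq \in U(\fp)} T^2_\fq \neq 0$, as claimed.

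The main obstacle, which I expect to occupy the bulk of Section~4 preceding the theorem, is the upper bound $B^i_\fq = 0$ for $i > \inf\{s+1,\, \dim R/\fq\}$---above all in case (i), where $|k| = \aleph_1$ while $\dim R/\fq$ may be large. This is where the cardinality hypothesis on $k$ must enter, most plausibly via a $\varprojlim^p$-vanishing argument of Mittag-Leffler/Roos type applied to a cofinal subsystem of $\{R[1/f]\}_{f \notin \fq}$ of cardinality at most $|k| = \aleph_1$, together with a translation back to vanishing of $B^i_\fq$ through Remark~\ref{minimal pure remark 4}. In case (ii), the corresponding vanishing is instead automatic because $P^i = 0$ for $i > \dim R = 2$.
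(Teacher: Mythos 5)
Your proposal is correct and follows essentially the same route as the paper: both read off $\Ext^i_R(R_\fp,R)$ from $\Hom_R(R_\fp,P)$ using the formula $\Hom_R(R_\fp,\prod_\fq T^i_\fq)\cong\prod_{\fq\in U(\fp)}T^i_\fq$, observe that under the hypotheses only the degree-two term survives for $\fq\subseteq\fp$, and get non-vanishing of $T^2_\fp$ from full-cosupport via Lemma \ref{lemma 2}. The only cosmetic difference is that the paper packages the structural input as Theorem \ref{complete description} (with the upper bound on the length of the resolution coming from the cited Gruson--Raynaud bound on projective dimension of flat modules rather than an explicit $\varprojlim^{(p)}$ argument), whereas you re-derive the non-vanishing of $T^2_\fp$ inside the proof.
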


This theorem essentially follows from the main result (Corollary \ref{main cor}) and the following result of Thorup. 

\begin{proposition}[{\cite[Theorem 13]{AT}}]
\label{Thorup's theorem}
Let $k$ be an uncountable field and $n$ be an integer with $n\geq 2$.
Set $R=k[X_1,\ldots,X_n]$.
Then $\Ext^1_R(Q(R),R)=0$. 
\end{proposition}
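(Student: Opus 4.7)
\emph{Plan.}

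Step 1: Write $Q(R)=\varinjlim_{0\neq f\in R}R_f$ as a filtered colimit along the divisibility-ordered set of nonzero $f$. Each $R_f\cong R[T]/(fT-1)$ admits the length-one free $R$-resolution
\[0\to R[T]\xrightarrow{fT-1}R[T]\to R_f\to 0,\]
so $\Ext_R^{\geq 2}(R_f,R)=0$. Since $R$ is a Noetherian domain and $f$ is a non-unit, the Krull intersection theorem $\bigcap_n f^nR=0$ yields $\Hom_R(R_f,R)=0$. Applying $\Hom_R(-,R)$ to $0\to R\to R_f\to R_f/R\to 0$ and writing $R_f/R=\varinjlim_n R/f^nR$, the Milnor sequence together with $\Ext^1_R(R/f^nR,R)\cong R/f^nR$ yields $\Ext^1_R(R_f/R,R)\cong R^{\wedge}_f$, and hence $\Ext^1_R(R_f,R)\cong R^{\wedge}_f/R$, where $R^{\wedge}_f$ denotes the $f$-adic completion.

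Step 2: Since $\Hom_R(R_f,R)=0$, the Grothendieck spectral sequence $\varprojlim_f^{p}\Ext_R^q(R_f,R)\Rightarrow\Ext_R^{p+q}(Q(R),R)$ collapses on the $q=1$ row to give
\[\Ext^1_R(Q(R),R)\cong\varprojlim_{0\neq f\in R}R^{\wedge}_f/R.\]
Because the transitions $R^{\wedge}_g\twoheadrightarrow R^{\wedge}_f$ for $f\mid g$ are surjective (as inverse limits of the surjections $R/g^n\twoheadrightarrow R/f^n$), the system $(R^{\wedge}_f)$ satisfies Mittag-Leffler. Applying $\varprojlim_f$ to $0\to R\to R^{\wedge}_f\to R^{\wedge}_f/R\to 0$ (and using that $R$ is a constant system, so $\varprojlim^1 R=0$) gives
\[\Ext^1_R(Q(R),R)\cong\bigl(\varprojlim_{0\neq f\in R}R^{\wedge}_f\bigr)\big/R.\]

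Step 3: It then suffices to show the canonical diagonal map $R\to\varprojlim_{0\neq f\in R}R^{\wedge}_f$ is surjective when $k$ is uncountable and $n\geq 2$. For coprime factors one has the Chinese Remainder isomorphism $R^{\wedge}_{fg}\cong R^{\wedge}_f\times R^{\wedge}_g$, so the uncountable family of distinct linear polynomials $\{X_1-\alpha\}_{\alpha\in k}$ contributes independent ``coordinates'' of a compatible system $(\tilde\xi_f)$. On the other hand, when $n\geq 2$ the pairs $X_i,X_j$ with $i\neq j$ fail to be comaximal, and a direct power-series expansion shows $R^{\wedge}_{X_iX_j}$ embeds as a \emph{proper} submodule of $R^{\wedge}_{X_i}\times R^{\wedge}_{X_j}$: for instance, the element $(1,0)$ cannot be lifted, since the constant coefficient imposed by projection to $R^{\wedge}_{X_i}$ contradicts the vanishing imposed by projection to $R^{\wedge}_{X_j}$. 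Combining this two-variable rigidity with the uncountable family of one-variable constraints forces every compatible family $(\tilde\xi_f)$ to coincide with the diagonal image of a single element of $R$.

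The main obstacle is step 3. One must carefully leverage the interplay between the product decomposition of completions at coprime principal ideals and the rigidity at non-comaximal pairs to eliminate every non-diagonal element of $\varprojlim_fR^{\wedge}_f$. Both hypotheses are essential here: without uncountability of $k$ one lacks the combinatorial pressure, and without $n\geq 2$ there are no non-comaximal pairs of principal ideals to produce the required rigidity. In both omitted cases Gruson's conjecture predicts $\Ext^1_R(Q(R),R)\neq 0$, showing that the hypotheses cannot be dropped.
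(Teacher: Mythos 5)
The paper does not prove this proposition at all: it is quoted verbatim from Thorup \cite[Theorem 13]{AT}, so there is no internal argument to compare against. Judged as a self-contained proof, your attempt has a genuine gap. Steps 1 and 2 are essentially sound: the computation $\Ext^1_R(R_f,R)\cong R^\wedge_f/R$ is classical, and the low-degree exact sequence of the Roos spectral sequence (using $\Hom_R(R_f,R)=0$ for non-unit $f$, and the vanishing of higher derived limits of a constant system over a directed poset, since the nerve is contractible) correctly identifies $\Ext^1_R(Q(R),R)$ with $\bigl(\varprojlim_f R^\wedge_f\bigr)/R$, i.e.\ with $\widehat R/R$ for the linear topology of nonzero principal ideals. (One quibble: your justification that $R^\wedge_g\to R^\wedge_f$ is surjective ``as an inverse limit of surjections'' is false as stated --- inverse limits do not preserve surjectivity without Mittag--Leffler on the kernels, and the relevant chain $(f^m)$ in $R/(g^n)$ need not stabilize, e.g.\ $(x^m)$ in $k[x,y]/(xy)$; fortunately that surjectivity is not actually needed for the identification you draw from it.)

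The problem is Step 3, which is the entire mathematical content. The statement that the diagonal $R\to\varprojlim_f R^\wedge_f$ is surjective is precisely the main completeness theorem of Thorup's paper, so Steps 1--2 have only reduced the proposition to an equivalent (and equally hard) assertion. Your two supporting observations --- the CRT splitting $R^\wedge_{fg}\cong R^\wedge_f\times R^\wedge_g$ for comaximal $f,g$, and the non-liftability of $(1,0)$ to $R^\wedge_{X_iX_j}$ --- are both correct, but the sentence ``combining this two-variable rigidity with the uncountable family of one-variable constraints forces every compatible family to coincide with the diagonal image'' is an assertion of the conclusion, not a derivation. An actual proof must take an arbitrary compatible family $(\xi_f)$ and produce a single polynomial representing it; Thorup does this by restricting to the uncountably many constraints coming from polynomials such as $X_1-\alpha$, using a cardinality/pigeonhole argument to force uncountably many of the local representatives to have bounded degree, and then gluing. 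Nothing of that sort appears in your sketch, and without it the hypotheses ``$k$ uncountable'' and ``$n\geq 2$'' are never actually used in a load-bearing way. As written, Step 3 is a restatement of what must be proved.
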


We start with extending this to the next corollary.

\begin{corollary}\label{weak form}
Let $k$ be an uncountable field.
Suppose that $S$ is an affine domain over $k$ with $\dim S\geq 2$. 
Then $\Ext^1_{S}(Q(S),S)=0$. 
\end{corollary}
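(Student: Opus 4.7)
The plan is to leverage Thorup's theorem via Noether normalization. Since $S$ is an affine domain over $k$ of dimension $n\geq 2$, Noether normalization produces a finite injective morphism $R:=k[X_1,\ldots,X_n]\hookrightarrow S$; since $n\geq 2$, Proposition \ref{Thorup's theorem} gives $\Ext^1_R(Q(R),R)=0$, and the task is to transport this vanishing across the finite extension to $\Ext^1_S(Q(S),S)$.

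First I would reduce to an $\Ext$-computation over $R$. Because $S$ is a domain that is finite over $R$, the ring $Q(R)\otimes_R S$ is a domain integral over the field $Q(R)$, hence itself a field; it contains $S$, so $Q(R)\otimes_R S\cong Q(S)$. Combining this with the flatness of $Q(R)$ over $R$ and the standard base-change adjunction $\RHom_S(M\LotimesR S,N)\cong \RHom_R(M,N)$ yields
$$\RHom_S(Q(S),S)\cong \RHom_R(Q(R),S)$$
in $D(R)$, so it suffices to show $\Ext^1_R(Q(R),S)=0$.

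Next I would compare $S$ to a free $R$-module. Pick $s_1,\ldots,s_\ell\in S$ whose images form a $Q(R)$-basis of $Q(S)$, where $\ell=[Q(S):Q(R)]$; this produces an $R$-linear injection $R^\ell\hookrightarrow S$ whose cokernel $T$ is finitely generated over $R$ and satisfies $T\otimes_R Q(R)=0$, hence is annihilated by some nonzero $r\in R$. Viewing $T$ as an $R/(r)$-module and using base change once more,
$$\RHom_R(Q(R),T)\cong \RHom_{R/(r)}\bigl(Q(R)\LotimesR R/(r),T\bigr);$$
but $r$ acts invertibly on $Q(R)$, so the complex $Q(R)\LotimesR R/(r)$ (computed from the length-one resolution $R\xrightarrow{r}R$ of $R/(r)$) is acyclic, and therefore $\RHom_R(Q(R),T)=0$.

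Applying $\RHom_R(Q(R),-)$ to the short exact sequence $0\to R^\ell\to S\to T\to 0$ and using this vanishing together with Thorup's theorem gives $\Ext^1_R(Q(R),S)\cong \Ext^1_R(Q(R),R)^\ell=0$, whence $\Ext^1_S(Q(S),S)=0$. The main subtlety, in my view, lies in isolating the correct base-change identifications so that Thorup's theorem for the polynomial ring $R$ can actually be pulled back to the finite extension $S$; once the identifications $Q(R)\otimes_R S=Q(S)$, the derived base-change isomorphism, and the vanishing of $\RHom_R(Q(R),-)$ on torsion $R$-modules are in hand, the conclusion is formal.
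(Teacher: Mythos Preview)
Your proof is correct, but it follows a genuinely different route from the paper's. Both arguments begin with Noether normalization $R=k[X_1,\ldots,X_n]\hookrightarrow S$ and Thorup's vanishing $\Ext^1_R(Q(R),R)=0$, but they transport this vanishing to $S$ by different mechanisms. The paper works through the minimal pure-injective resolution $P$ of $R$: Thorup's theorem forces the $(0_R)$-component of $P^1$ to vanish (via Remark~\ref{minimal pure remark 4}), and then Proposition~\ref{Enochs's theorem} together with the structure isomorphism~(\ref{Thompson's isom}) shows that $P\otimes_R S$ is a minimal pure-injective resolution of $S$ whose degree-one term has no $(0_S)$-component, whence $\Hom_S(Q(S),P^1\otimes_R S)=0$. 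You bypass that machinery entirely: the extension--restriction adjunction together with $Q(R)\otimes_R S\cong Q(S)$ reduces the problem to $\Ext^1_R(Q(R),S)=0$, and a direct d\'evissage $0\to R^\ell\to S\to T\to 0$ with torsion cokernel $T$ (on which $\RHom_R(Q(R),-)$ vanishes because $r$ acts invertibly on $Q(R)$ and by zero on $T$) finishes the job. Your argument is shorter and uses nothing beyond standard derived adjunctions; the paper's argument, while heavier, is tailored to its ambient framework and feeds directly into the structural description of $PE^i(R)$ pursued in Section~4.
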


\begin{proof}
Set $n=\dim S$ and $R=k[X_1,\dots, X_n]$.
The Noether normalization theorem yields a finite injection $\varphi:R\rightarrow S$.
Write $f:\Spec S\to \Spec R$ for the canonical map induced by $\varphi$.
Let $P$ be a minimal pure-injective resolution of $R$.
Using Remark \ref{minimal pure remark 3}(i) and (iii), we write $P^1= \prod_{\fp\in W_{\geq 1}} T^1_\fp$, 
where $T^1_\fp=(\bigoplus_{B_\fp^1}R_\fp)^\wedge_{\fp}$ and $B^1_\fp=\dim_{\kappa(\fp)}\mathrm{H}^1\big(\kappa(\fp)\LotimesR\RHom_R(R_\fp, R)\big)$.
By Proposition \ref{Thorup's theorem},
we have $\Ext_R^1(Q(R), R)=0$, so that $B^1_{(0_R)}=0$.
In other words, it holds that $$P^1=\prod_{\fp\in W_{\geq 1}} T^1_\fp=\prod_{\fp\in W_{\geq 1}\backslash \{(0_R)\}} T^1_\fp.$$
Recall that $S\otimes_RP$ is a minimal pure-injective resolution of $S$ in $\Mod S$ by Proposition \ref{Enochs's theorem}.
Moreover, since $f(0_S)=(0_R)$, we see from (\ref{Thompson's isom}) that 
$\Hom_S(Q(S),S\otimes_RP^1)=0$.
Hence it holds that 
$$\Ext^1_S(Q(S), S)\cong \mathrm{H}^1\big(\Hom_S(Q(S), S\otimes_RP)\big)=0.$$
\end{proof}

Using this corollary, we can show the next result.

\begin{corollary}\label{strong form}
Let $k$ be an uncountable field and $R$ be an affine ring over $k$.
If $\fp\in \Spec R$ with $\dim R/\fp\geq 2$, then we have 
$$\mathrm{H}^1\big(\kappa(\fp)\LotimesR\RHom_{R}(R_\fp,R)\big)=0.$$
\end{corollary}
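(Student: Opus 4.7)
The plan is to reduce to Corollary~\ref{weak form} applied to the affine domain $R/\fp$ via the finite surjection $\varphi\colon R \twoheadrightarrow R/\fp$ and Enochs's Proposition~\ref{Enochs's theorem}. Let $P$ be a minimal pure-injective resolution of $R$, and under Notation~\ref{minimal pure remark 2} write $P^i = \prod_{\fp' \in \Spec R} T^i_{\fp'}$ with $T^i_{\fp'} = (\bigoplus_{B^i_{\fp'}} R_{\fp'})^\wedge_{\fp'}$. By Remark~\ref{minimal pure remark 4}, the dimension we wish to show is zero is exactly $B^1_\fp$, so the task reduces to proving $B^1_\fp = 0$.

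By Proposition~\ref{Enochs's theorem}, $P \otimes_R R/\fp$ is a minimal pure-injective resolution of the flat $R/\fp$-module $R/\fp$. Formula (\ref{Thompson's isom}) identifies its first term with $\prod_{\fq \in \Spec R/\fp} (\bigoplus_{B^1_{\varphi^{-1}(\fq)}} (R/\fp)_\fq)^\wedge_\fq$; since $\varphi^{-1}(0_{R/\fp}) = \fp$, the factor at the generic point has multiplicity $B^1_\fp$ and equals $\bigoplus_{B^1_\fp} Q(R/\fp)$, as the $(0_{R/\fp})$-adic completion is trivial on a $Q(R/\fp)$-vector space. Applying $\Hom_{R/\fp}(Q(R/\fp), -)$ to $P \otimes_R R/\fp$ then kills every other factor, by the first displayed formula in Section~3 together with $U((0_{R/\fp})) = \{(0_{R/\fp})\}$, leaving a complex of $Q(R/\fp)$-vector spaces whose first term is $\bigoplus_{B^1_\fp} Q(R/\fp)$.

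Lemma~\ref{lemma 3}, applied to the flat $R/\fp$-module $R/\fp$ at the prime $(0_{R/\fp})$, says that the differentials of this complex vanish after tensoring with $\kappa((0_{R/\fp})) = Q(R/\fp)$ over $R/\fp$; since its terms are already $Q(R/\fp)$-vector spaces, that tensor product acts as the identity, so the differentials themselves are zero. Consequently $\Ext^1_{R/\fp}(Q(R/\fp), R/\fp) \cong \bigoplus_{B^1_\fp} Q(R/\fp)$, which vanishes by Corollary~\ref{weak form}, forcing $B^1_\fp = 0$. The main point to check is that the multiplicity $B^1_\fp$ is preserved under the base change $(-)\otimes_R R/\fp$ and under the passage to the generic point of $\Spec R/\fp$, which is handled cleanly by (\ref{Thompson's isom}); beyond this bookkeeping the argument is essentially formal.
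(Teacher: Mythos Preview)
Your proof is correct and follows essentially the same approach as the paper: both pass to $R/\fp$ via Proposition~\ref{Enochs's theorem} and formula~(\ref{Thompson's isom}), identify the relevant quantity with $\Ext^1_{R/\fp}(Q(R/\fp),R/\fp)$, and conclude by Corollary~\ref{weak form}. The only difference is presentational: you route the argument through the cardinality $B^1_\fp$ and invoke Lemma~\ref{lemma 3} explicitly, whereas the paper writes down the isomorphism of complexes $\Hom_R(R_\fp,P)\otimes_R R/\fp\cong \Hom_{R/\fp}(Q(R/\fp),P\otimes_R R/\fp)$ directly and reads off the Ext isomorphism in one step.
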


\begin{proof}
Notice that $\kappa(\fp)\LotimesR\RHom_{R}(R_\fp,R)\cong R/\fp\LotimesR \RHom_{R}(R_\fp,R)$.
Let $P$ be a minimal pure-injective resolution of $R$.
By Remark \ref{minimal pure remark 3}(ii), $R/\fp\otimes_R\Hom_{R}(R_\fp,P)$ is a complex of modules over $\kappa(\fp)=Q(R/\fp)$. Moreover it is seen from (\ref{Thompson's isom}) that we can also get this complex by applying $\Hom_{R/\fp}(Q(R/\fp),-)$ to $R/\fp\otimes_RP$;
$$R/\fp\otimes_R \Hom_{R}(R_\fp,P)
\cong \Hom_{R/\fp}(Q(R/\fp), R/\fp\otimes_RP).$$
Since $R/\fp\otimes_RP$ is a minimal pure-injective resolution of $R/\fp$ in $\Mod R/\fp$ by Proposition \ref{Enochs's theorem}, it follows that
$$\mathrm{H}^i\big(R/\fp\LotimesR \RHom_{R}(R_\fp,R)\big)
\cong \Ext^i_{R/\fp}(Q(R/\fp), R/\fp).$$
The right-hand side vanishes if $i=1$, by Corollary \ref{weak form}.
\end{proof}

Let $R$ be a commutative noetherian ring. Using Remark \ref{minimal pure remark 3}(i),
we write  $PE^i(R)= \prod_{\fp\in W_{\geq i}} T^i_\fp$.
We set $W_i=\{\fp\in \Spec R \mid \dim R/\fp=i \}$.
It is known that 
$$PE^0(R)=\prod_{\fm\in W_{0}}\widehat{R_\fm},$$ see \cite[Proposition 6.7.3]{EJ}. In general, it is not easy to know non-trivial components of $PE^i(R)$ for  $i>0$. However, in the case of affine rings, we can obtain the following result by Corollary \ref{main cor} and Lemma \ref{lemma 2}.

\begin{corollary}\label{add remark}
Let $R$ be an affine ring over a field. Then, for any $\fp\in \Spec R$, there exists an integer $i\geq 0$ such that the component $T^i_\fp$ of $PE^i(R)$ is non-trivial.  
\end{corollary}

Let $k$ and $s$ be as in Conjecture \ref{Gruson conj}.
Assume that $R$ is an affine ring over $k$ with $n=\dim R$. 
By \cite[II; Corollary 3.3.2]{GR}, the projective dimension of any flat $R$-module is at most $s+1$.
Thus, we see from Remark \ref{minimal pure remark 3} that the pure-injective dimension of $R$ is at most $\inf\{s+1, n\}$, that is, $PE^i(R)=0$ for $i>\inf\{s+1, n\}$. 
See also \cite[II; Corollary 3.2.7]{GR} and \cite[Theorem 8.4.12]{EJ}. 
In particular, when $\inf\{s+1, n\}=0$, {\it i.e.}, $R$ is a $0$-dimensional affine ring, then $R$ is pure-injective.

Now we suppose that $\inf\{s+1, n\}=1$. This means that $k$ is at most countable and $n\geq 1$, or $k$ is any field and $n=1$.
Then the minimal pure-injective resolution of $R$ is of the form
\begin{align*}
0\to \prod_{\fm \in W_0} \widehat{R_\fm}\to \prod_{\fp\in W_{\geq 1}}T^1_\fp\to 0.
\end{align*}
In this case, it follows from Corollary \ref{add remark} that $T^1_\fp\neq 0$ for any $\fp\in W_{\geq 1}$, see also \cite[Theorem 4.13]{PT1}.

Next we consider the case that $\inf\{s+1, n\}\geq 2$. 
Combining Remark \ref{minimal pure remark 3}(iii), Corollary \ref{strong form} and Corollary \ref{add remark}, we can obtain the proposition below.

\begin{proposition}\label{Extension of Enochs}
Let $k$ be an uncountable field and $R$ be an affine ring over $k$ with $\dim R\geq 2$.
Using Remark \ref{minimal pure remark 3}(i),
we write $PE^1(R)= \prod_{\fp\in W_{\geq 1}} T^1_\fp$.
Then it holds that 
$$PE^1(R)=\prod_{\fp\in W_{1}} T^1_\fp,$$
where $T^1_\fp\neq 0$ for all $\fp\in W_1$.
\end{proposition}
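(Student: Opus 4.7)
The plan is to verify the two claims separately. The description of $PE^1(R)$ as a product indexed only by $W_1$ amounts to killing the $T^1_\fp$ at primes $\fp\in W_{\geq 2}$, whereas the non-vanishing of $T^1_\fp$ for $\fp\in W_1$ follows from full-cosupport of $R$ together with the degree restriction on minimal pure-injective resolutions.

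For the first claim, I would fix $\fp\in W_{\geq 2}$ and apply Corollary \ref{strong form} to obtain
$$H^1\bigl(\RHom_R(R_\fp, R)\LotimesR \kappa(\fp)\bigr)=0.$$
By Remark \ref{minimal pure remark 4}, the cardinality $B^1_\fp$ defining $T^1_\fp=(\bigoplus_{B^1_\fp}R_\fp)^\wedge_\fp$ is exactly the $\kappa(\fp)$-dimension of this cohomology group, so $B^1_\fp=0$ and hence $T^1_\fp=0$. In view of Remark \ref{minimal pure remark 3}(i), this prunes the product $PE^1(R)=\prod_{\fp\in W_{\geq 1}}T^1_\fp$ down to $\prod_{\fp\in W_1}T^1_\fp$.

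For the second claim, I would fix $\fp\in W_1$. Corollary \ref{main cor} gives $\fp\in \cosupp_R R$, and then Lemma \ref{lemma 2} yields some index $i\geq 0$ with $T^i_\fp\neq 0$. It remains to rule out $i=0$ and $i\geq 2$. Since $PE^0(R)=\prod_{\fm\in W_0}\widehat{R_\fm}$ only has factors at maximal ideals and $\dim R/\fp=1$ forces $\fp$ to be non-maximal, we have $T^0_\fp=0$. Likewise, $PE^i(R)=\prod_{\fq\in W_{\geq i}}T^i_\fq$ by Remark \ref{minimal pure remark 3}(i), and $\fp\notin W_{\geq i}$ whenever $i\geq 2$, so $T^i_\fp=0$ for all such $i$. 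The only surviving degree is $i=1$, forcing $T^1_\fp\neq 0$.

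The real work is already behind us: it lies in Corollary \ref{strong form}, which rests on Thorup's Proposition \ref{Thorup's theorem} together with Noether normalization and transfer of the minimal resolution through a finite morphism via Proposition \ref{Enochs's theorem}. Once that vanishing is in hand, the remaining argument is just a short piece of bookkeeping combining the shape of minimal pure-injective resolutions of flat modules from Remark \ref{minimal pure remark 3}, the multiplicity formula of Remark \ref{minimal pure remark 4}, and full-cosupport from Corollary \ref{main cor}.
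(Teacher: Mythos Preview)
Your proof is correct and follows exactly the route the paper indicates: the paper's proof is the single line ``Combining Remark \ref{minimal pure remark 4}, Corollary \ref{strong form} and Remark \ref{add remark}'', and you have simply unpacked those three ingredients (with Remark \ref{add remark} itself being the combination of Corollary \ref{main cor} and Lemma \ref{lemma 2} that you invoke). The only extra detail you spell out---using the shape of $PE^0(R)$ and Remark \ref{minimal pure remark 3}(i) to exclude degrees $i=0$ and $i\geq 2$ for $\fp\in W_1$---is implicit in the paper's citation of Remark \ref{add remark} and is handled correctly.
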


This proposition extends Enochs's result \cite[Theorem 3.5]{E}, in which he proved this fact for polynomial rings over the fields of real numbers and complex numbers.

Finally we focus on the case that $\inf\{s+1, n\}=2$.
By Corollary \ref{add remark} and Proposition \ref{Extension of Enochs}, we have the following result.

\begin{theorem}\label{complete description}
Let $k$ be an uncountable field and $R$ be an affine ring over $k$ with $\dim R\geq 2$.
Assume that $|k|=\aleph_1$ or $\dim R=2$.
Using Remark \ref{minimal pure remark 3}(i),
we write $PE^i(R)= \prod_{\fp\in W_{\geq i}} T^i_\fp$. Then, the minimal pure-injective resolution of $R$ is of the form
$$0\to \prod_{\fm \in W_0} \widehat{R_\fm}\to \prod_{\fp\in W_{1}} T^1_\fp\to \prod_{\fp\in W_{\geq 2}}T^2_\fp\to 0,$$
where $T^1_\fp\neq 0$ for all $\fp\in W_{1}$, and $T^2_\fp\neq 0$ for all $\fp\in W_{\geq 2}$.
\end{theorem}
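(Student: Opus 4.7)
The plan is to assemble the statement from the pieces already established rather than prove anything substantially new; the theorem is really the consolidation of a full-cosupport statement with a pure-injective dimension bound. First, I would fix the overall shape of the resolution. By the citation from Gruson--Raynaud, the projective dimension of any flat $R$-module is at most $s+1$, and hence by Remark \ref{minimal pure remark 3} and Lemma \ref{lemma 3} the pure-injective dimension of $R$ is at most $\inf\{s+1,\dim R\}$. Under either hypothesis ($|k|=\aleph_1$, so $s=1$, or $\dim R = 2$) this infimum equals $2$, giving $PE^i(R)=0$ for $i\geq 3$. This produces the third term of the sequence and justifies the trailing zero.

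Next I would read off the three nonzero terms. The zeroth term is $\prod_{\fm\in W_0}\widehat{R_\fm}$, which is the standard formula already recorded just before Remark \ref{add remark}. The first term is handled entirely by Proposition \ref{Extension of Enochs}, which states $PE^1(R)=\prod_{\fp\in W_1}T^1_\fp$ together with $T^1_\fp\neq 0$ for every $\fp\in W_1$. The second term is, by Remark \ref{minimal pure remark 3}(i), automatically of the form $\prod_{\fp\in W_{\geq 2}}T^2_\fp$, so only the non-vanishing claim remains.

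For the non-vanishing claim on $PE^2(R)$, I would argue by elimination: fix $\fp\in W_{\geq 2}$ and locate an index $i$ for which $T^i_\fp$ must be nonzero. Since $R$ is affine over $k$, Corollary \ref{main cor} yields full-cosupport, so $\fp\in\cosupp_R R$, and then Lemma \ref{lemma 2} guarantees $T^i_\fp\neq 0$ for some $i\geq 0$. But the indices $i=0,1,\geq 3$ are all ruled out: $i=0$ is impossible because $\fp$ is not maximal (as $\dim R/\fp\geq 2$), $i=1$ is impossible by Proposition \ref{Extension of Enochs} since $\fp\notin W_1$, and $i\geq 3$ is ruled out by the length bound above. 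The only remaining option is $i=2$, so $T^2_\fp\neq 0$ as required.

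The genuinely hard steps in this argument have already been performed earlier in the paper: the full-cosupport theorem (Theorem \ref{main theorem} and Corollary \ref{main cor}), Thorup's vanishing of $\Ext^1_R(Q(R),R)$ packaged into Proposition \ref{Extension of Enochs} via the change-of-rings lemma, and Gruson's projective dimension bound. The only potential obstacle in this consolidation step is notational: one must consistently use Remark \ref{minimal pure remark 3}(i) to index the terms $T^i_\fp$ by $W_{\geq i}$ and then argue term-by-term that the dimension restriction $\dim R/\fp = j$ pinpoints the unique cohomological degree in which $\fp$ can contribute. Once this bookkeeping is made explicit, the proof is a two-line combination of Corollary \ref{main cor}, Proposition \ref{Extension of Enochs}, and the length bound.
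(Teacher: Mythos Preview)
Your proposal is correct and follows essentially the same approach as the paper: the paper simply says the theorem follows from Remark \ref{add remark} and Proposition \ref{Extension of Enochs}, together with the pure-injective dimension bound $\inf\{s+1,n\}=2$ discussed just before, and your argument spells out exactly this elimination (full-cosupport forces some $T^i_\fp\neq 0$, and the shape of $PE^0$, $PE^1$, and the length bound leave only $i=2$ for $\fp\in W_{\geq 2}$).
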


\begin{proof}[Proof of Theorem \ref{partial answer}]
Since $\dim R/\fp\geq 2$, we see that $U(\fp)\cap W_0=\emptyset$, $U(\fp)\cap W_1=\emptyset$ and $U(\fp)\cap W_{\geq 2}=U(\fp)$. 
Therefore, setting $P$ as the minimal pure-injective resolution in Theorem \ref{complete description}, we obtain the following isomorphisms in $\mathrm{D}(R)$;
$$\RHom_R(R_\fp, R)\cong \Hom_R(R_\fp, P)\cong  \prod_{\fq\in U(\fp)}T^2_\fq[-2].$$
Hence $\Ext^0_R(R_\fp, R)=\Ext^1_R(R_\fp, R)=0$ and $\Ext^2_R(R_\fp, R)\cong \prod_{\fq\in U(\fp)}T^2_\fq \neq 0$.
\end{proof}

Suppose that $R$ is a polynomial ring over the field of  real numbers or complex numbers. Then, under the continuum hypothesis, we can apply Theorem \ref{partial answer} to $R$. 
In fact, we need the continuum hypothesis  to ensure that the pure-injective dimension of a flat $R$-module is at most $2$, see \cite[\S 8; p.228]{BLO} and \cite[Theorem 8.4.12]{EJ}.

There is a local version of Conjecture \ref{Gruson conj}; it claims that if $\fm$ is a maximal ideal of $R=k[X_1,\ldots, X_n]$, then $\Ext^i_{R_\fm}(Q(R), R_\fm)\neq 0$ if and only if $i=\inf\{s+1, n\}$ (\cite[\S1; 1]{AT}).
When $\inf\{s+1, n\}\leq 1$, this is true. 
In \cite[Proposition 3.2]{G}, Gruson verified that the equivalence holds if $s\geq 1$, $n=2$ and $\fm =(X_1, X_2)$.

Under the assumption that $\inf\{s+1, n\}\geq 2$, Thorup \cite[Theorem 13]{AT} also showed that $\Ext^1_{R_\fm}(Q(R),R_\fm)=0$, where $\fm$ is any maximal ideal of $R$.
Therefore, provided that $\inf\{s+1, n\}=2$, it is enough to solve Question \ref{question 1} in order to prove the local version.



\bibliographystyle{amsplain}

\end{document}